\documentclass[a4paper,oneside,11pt]{article}
\usepackage[utf8]{inputenc}
\usepackage{lmodern,stackrel}
\usepackage[T1]{fontenc}
\usepackage{verbatim}
\usepackage{textcomp}
\usepackage[english]{babel}
\usepackage[pdftex]{hyperref}
\usepackage[pdftex]{color,graphicx}
\usepackage{amssymb}
\usepackage{amsmath}
\usepackage{upgreek}
\usepackage{graphicx, epstopdf}
\usepackage{dsfont}

\renewcommand{\leq}{\leqslant}
\renewcommand{\geq}{\geqslant}

\usepackage{amsmath,amsfonts,amssymb,amsthm,mathrsfs,stackrel}

\def\build#1_#2^#3{\mathrel{
\mathop{\kern 0pt#1}\limits_{#2}^{#3}}}

\theoremstyle{plain}
\newtheorem{theorem}{Theorem}

\theoremstyle{definition}

\theoremstyle{remark}
\newtheorem*{remark}{Remark}

\newcommand{\Z}{{\mathbb{Z}}}


\begin{document}
\title{A note on truncated long-range percolation with heavy tails on oriented graphs}
\author{C.T.M. Alves\footnote{Departamento de Estat\'istica, IMECC, Universidade Estadual de Campinas,  rua S\'ergio Buarque de Holanda 651,
13083--859, Campinas SP, Brazil}, M. Hil\'ario\footnote{Departamento de Matem{\'a}tica, Universidade Federal de Minas Gerais, Av. Ant\^onio
Carlos 6627 C.P. 702 CEP 30123-970 Belo Horizonte-MG, Brazil}, B.N.B. de Lima$^\dagger$, D. Valesin\footnote{Johann Bernoulli Instituut, Rijksuniversiteit Groningen, Nijenborgh 9 9747 AG Groningen, The Netherlands}}
\date{}
\maketitle


\begin{abstract}
We consider oriented long-range percolation on a graph with vertex set $\Z^d \times \Z_+$ and directed edges of the form $\langle (x,t), (x+y,t+1)\rangle$, for $x,y$ in $\Z^d$ and $t \in \Z_+$. Any edge of this form is open with probability $p_y$, independently for all edges. Under the assumption that the values $p_y$ do not vanish at infinity, we show that there is percolation even if all edges of length more than $k$ are deleted, for $k$ large enough. We also state the analogous result for a long-range contact process on $\Z^d$.
\end{abstract}
{\footnotesize Keywords: contact processes; oriented percolation; long-range percolation; truncation \\
MSC numbers:  60K35, 82B43}

\section{Introduction}

\noindent
Let $G = (\mathbb{V}, \mathbb{E})$ be the graph with set of vertices $\mathbb{V} = \Z^d \times \Z_+$ and  set of (oriented) bonds 
\begin{equation}\label{eq:def_edges_G}\mathbb{E} =\left\{ \langle (x,t), (x+y,t+1)\rangle :\; x,y \in \Z^d,\; t \in \Z_+ \right\}.\end{equation}
Let $(p_y)_{y \in \Z^d}$ be a family of numbers in the interval $[0,1]$  and consider a Bernoulli bond percolation model where each bond $\langle (x,t), (x+y,t+1)\rangle\in \mathbb{E}$ is open with probability $p_y$, independently for all bonds.  That is, take $(\Omega, \, \mathcal{A}, \, P)$, where $\Omega = \{0,1\}^{\mathbb{E}}$, $\mathcal{A}$ is the canonical product $\sigma$-algebra, and $P = \prod_{e \in \mathbb{E}} \mu_e$, where $\mu_e({\omega}_e = 1) = p_{y} = 1- \mu_e({\omega}_e = 0)$ for $e = \langle (x,t), (x+y,t+1)\rangle \in\mathbb{E}$. An element $\omega \in \Omega$ is called a percolation configuration.

A (finite or infinite) sequence $(v_0, v_1, \dots )$ with $v_i \in G$ for each $i$ is called an oriented path if, for each $i$, $v_i-v_{i-1}=(y,1)$ for some $y\in\mathbb{Z}^d$; the oriented path is open if each oriented edge $\langle v_i, v_{i+1} \rangle$ is open.
For $(x,t), (x',t') \in \mathbb{V}$ with $t < t'$, we denote by $\{(x,t) \rightsquigarrow (x',t')\}$ the event that there is an open oriented path from $(x,t)$ to $(x',t')$. If $A \subset \mathbb{V}$, we denote by $\{(x,t) \rightsquigarrow A\}$ the event that $(x,t)$ is connected by an open oriented path to some vertex of $A$. Finally,  we denote by $\{(x,t) \rightsquigarrow \infty\}$ the event that there is an infinite open oriented path started from $(x,t)$.

We now consider a truncation of the family $(p_y)_{y \in \Z^d}$ at some finite range $k$. More precisely, for each $k\in\mathbb{N}$ consider the truncated family $(p_y^k)_{y \in \Z^d}$, defined by
\begin{equation}
p_y^k=\left\{
\begin{array}
[c]{l}%
p_y,\mbox{  if } \|y\|_\infty \leq k,\\
0,\ \mbox{   otherwise},
\end{array}\right.\label{eq:truncation}
\end{equation}
and the measure $P^k = \prod_{e \in \mathbb{E}} \mu^k_e$, where $\mu^k_e({\omega}_e = 1) = p^k_{y} = 1- \mu^k_e({\omega}_e = 0)$ for $e = \langle (x,t), (x+y,t+1)\rangle \in\mathbb{E}$. Then, one can ask the \textbf{truncation question}: is it the case that, whenever percolation can occur for a sequence of connection probabilities, it can also occur for a sufficiently high truncation of the sequence? That is: in case $P\{0 \rightsquigarrow \infty\} > 0$, is there a large enough truncation constant $k$ for which we still have $P^k( 0 \rightsquigarrow\infty) >0$ ?

Numerous works (\cite{MS, Be, SSV, MSV, FLS, FL, LS, ELV} in chronological order) addressed this question considering different models (such as: the Ising model, oriented and non-oriented percolation, the contact process) or different assumptions on the sequence $(p_n)$ or on the graph. We direct the reader to the introductory sections of \cite{FL} and \cite{ELV} for a more thorough discussion. Our main contribution is the following:

\begin{theorem}\label{thm:main_cor} If there exists $\varepsilon > 0$ such that $p_y > \varepsilon$ for infinitely many vectors $y$, then the truncation question has an affirmative answer. Moreover, $${\displaystyle \lim_{k\to\infty}P^k\{(0,0) \rightsquigarrow \infty \} = 1}.$$
\end{theorem}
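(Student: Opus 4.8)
The plan is to pass to a finite family of long edges chosen with care, amplify it into a high--probability ``effective jump'' using exponentially many parallel microscopic routes, and then run a block argument against supercritical oriented percolation.

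\emph{Reduction.} Fix a large integer $m$. Using the hypothesis, pick vectors $z_1,\dots,z_{2m}$, one at a time, with $p_{z_i}>\varepsilon$ for all $i$ and with no integer relation $\sum_i c_iz_i=0$ satisfying $0<\sum_i|c_i|\le 4m$; at each step only finitely many values are forbidden while infinitely many admissible ones remain, so this is possible. Let $k_m:=\max_i\|z_i\|_\infty$, split $\{z_1,\dots,z_{2m}\}$ into disjoint batches $T_1,T_2$ of size $m$, and set $u_a:=\sum_{i\in T_a}z_i$. Since $P^{k_m}$ dominates the Bernoulli percolation in which only the edges with displacement in $\{z_1,\dots,z_{2m}\}$ are allowed to be open, and since $\{(0,0)\rightsquigarrow\infty\}$ is increasing, it suffices to show that in that restricted model $P\{(0,0)\rightsquigarrow\infty\}\ge 1-\eta(m)$ with $\eta(m)\to 0$. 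As $k\mapsto P^k\{(0,0)\rightsquigarrow\infty\}$ is nondecreasing and bounded by $1$, this forces $\lim_{k\to\infty}P^k\{(0,0)\rightsquigarrow\infty\}=1$ (and, a fortiori, a positive answer to the truncation question).

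\emph{The effective jump.} Fix a vertex $v$ and $a\in\{1,2\}$, and consider all length--$m$ oriented paths from $v$ that use each displacement $z_i$, $i\in T_a$, exactly once. Every such path ends at $v+(u_a,m)$, and by the relation--freeness the edges they traverse are pairwise distinct and form an isomorphic copy of the oriented hypercube on $\{0,1\}^m$ (from $v$ to $v+(u_a,m)$) with independent states of parameter $\ge\varepsilon$. Hence the probability that at least one such path is open equals the probability that, in directed Bernoulli percolation of parameter at least $\varepsilon$ on $\{0,1\}^m$, the source is joined to the sink; I claim this tends to $1$ as $m\to\infty$. The expected number of open source--to--sink paths is $m!\prod_{i\in T_a}p_{z_i}\ge m!\,\varepsilon^m\to\infty$, and a short computation shows that two independent uniform orderings of $[m]$ route a prescribed coordinate through the same intermediate vertex with probability $O(1/m^2)$, so the ratio of the second moment of the path count to the square of its mean tends to $1$; Paley--Zygmund then gives the claim.

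\emph{Renormalization.} For integers $0\le c\le r$ put $\psi(c,r):=\bigl(c\,u_1+(r-c)\,u_2,\ rm\bigr)$, so $\psi(0,0)=(0,0)$, and declare the renormalized edge from $(c,r)$ to $(c+1,r+1)$ (resp.\ to $(c,r+1)$) open when the effective jump of type $1$ (resp.\ $2$) from $\psi(c,r)$ succeeds. The relation--freeness makes the vertices $\psi(c,r)$ distinct and, crucially, makes distinct renormalized edges measurable with respect to disjoint sets of $\Z^d\times\Z_+$--edges: those from different time layers $[rm,(r+1)m]$ are disjoint automatically, and within one layer any overlap of the relevant hypercube regions would force a relation of $\ell^1$--size $\le 4m$, which has been excluded. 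Therefore the renormalized process is an independent oriented percolation on the cone $\{(c,r)\in\Z^2:0\le c\le r\}$ with edge parameter $\ge 1-\eta(m)$; for $m$ large this is supercritical and the origin is connected to infinity with probability $\ge 1-\eta'(m)$, $\eta'(m)\to 0$. Concatenating the microscopic open paths realizing an infinite renormalized open path from the origin produces an infinite open oriented path from $(0,0)$, so $P^{k_m}\{(0,0)\rightsquigarrow\infty\}\ge 1-\eta'(m)$, as required.

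\emph{Main obstacle.} The heart of the matter is the effective--jump lemma: we need one displacement that is realized with probability arbitrarily close to $1$, whereas a single long edge --- or even a bounded number of them combined over boundedly many steps --- is realized only with probability bounded away from $1$ (governed by ordinary low--density oriented percolation). The amplification comes from the factorially many reorderings of a single batch together with the second--moment control of their overlaps; the remaining, more routine, work is the combinatorial bookkeeping that secures independence of the renormalized edges, which dictates the ``no short relations'' choice of the $z_i$.
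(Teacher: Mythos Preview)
Your amplification-by-hypercube idea is elegant and genuinely different from the paper's two-step ``steering'' construction, but the independence claim in the renormalization step does not follow from your hypotheses. Consider two renormalized edges of the \emph{same} type $a\in\{1,2\}$ at the same time layer $r$, emanating from $(c,r)$ and $(c',r)$ with $\Delta:=c-c'\neq 0$. For them to share a microscopic edge one needs $\Delta(u_1-u_2)=\sum_{i\in S'}z_i-\sum_{i\in S}z_i$ for some $S,S'\subset T_a$ with $|S|=|S'|$. Rewriting this as $\sum_i c_iz_i=0$ and computing, one finds $\sum_i|c_i|=2m|\Delta|$ exactly (the coefficients on $T_{3-a}$ are all $-\Delta$, and those on $T_a$ are $\Delta+[i\in S]-[i\in S']$, whose absolute values sum to $m|\Delta|$ since $|S|=|S'|$). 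Your ``no relation of $\ell^1$-size $\le 4m$'' condition therefore rules out only $|\Delta|\le 2$; for $|\Delta|\ge 3$ nothing in your setup forbids overlap, and since the cone has width $r+1$ at level $r$, such pairs are present at every level $r\ge 3$. So the renormalized bond process is not independent. At best it is $K$-dependent for some finite $K$ that depends on the particular $z_i$ chosen (via the ratio of $\|u_1-u_2\|$ to $\max_i\|z_i\|$), and you would still have to argue that this $K$ can be controlled uniformly as $m\to\infty$, or else reorganize the construction so that the spacing $\|u_1-u_2\|$ dominates the diameter of each hypercube. The paper avoids this problem entirely: its two-step events $T^{(x,y)}_\pm$ are engineered to land in explicit intervals $I_{i,j}$ whose pairwise disjointness comes from a direct spacing condition, and the exploration only queries unexplored bonds at each step.

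A secondary point: your Paley--Zygmund step needs $\E\bigl[p^{-E}\bigr]\to 1$, where $E$ is the number of edges shared by two independent uniform orderings. The observation that each fixed coordinate is routed through the same edge with probability $O(1/m^2)$ yields only $\E[E]=O(1/m)$; since the indicators for different coordinates are not independent, this does not immediately bound the exponential moment, and the jump to ``ratio of moments tends to $1$'' is not justified by what you wrote. The conclusion is true (it is essentially the directed-hypercube connectivity result), but it needs a genuine moment computation, not just the first-moment bound.
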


This result generalizes the analogous result obtained in \cite{FLS} for non-oriented percolation on the square lattice. In that paper, the authors were able to construct a proper subgraph of $\Z^2$ with long (but limited) range edges that was isomorphic to a slab with two ``unbounded'' directions and arbitrarily large number of ``bounded'' dimensions and thickness. This allowed them to apply~\cite{GM} to obtain their result. In our case however, this approach is fruitless, since~\cite{GM} is not applicable in the case of oriented percolation processes. Therefore, we need to devise a new strategy.

In Section \ref{sec:variations}, we present two settings where a positive answer to the truncation question can be readily obtained from the above theorem: an anisotropic two-dimensional oriented percolation model and a long-range contact process on $\Z^d$. We prove Theorem \ref{thm:main_cor} in Section \ref{sec:proof}.

\section{Proof of Theorem \ref{thm:main_cor}}
\label{sec:proof}
We first prove the theorem for the case where $d = 1$, so that the family $(p_y)$ is given by a doubly-infinite sequence $(\ldots, p_{-1}, p_0,p_1,\ldots)$ (we replace $y$ by $n$ in the notation). Moreover, we assume that $p_n = 0$ if $n \leq 0$. In the end of this section, we will show how  we can obtain the general statement from this particular case.

By assumption, we can take $\epsilon>0$ such that $\limsup_{n\to\infty}  p_n>\epsilon>0$. Define the sequence $(a_n)_n$ as $$a_1=\inf\{i:\; p_i>\epsilon\},\qquad a_n=\inf\{i>a_{n-1}:\; p_i>\epsilon\},\; n >1.$$ Fix $\delta\in (0,1)$ to be chosen later. Define the integers $L_0$ and $L_1$
\begin{equation}
\label{eq:bin_cond} 
P(\text{Bin}(L_0,\epsilon)\geq 1)>1-\frac{\delta}{3},\quad P(\text{Bin}(L_1,\epsilon)\geq L_0)>1-\frac{\delta}{3}
\end{equation}
(here $\text{Bin}(n,p)$ denotes a Binomial distribution with parameters $n$ and $p$). Next, define $R$ such that
\begin{equation}\label{eq:defR}R=\max\{a_{L_1}, a_{2L_1}-a_{L_1}\}.\end{equation} Finally, take $L_2$ large enough such that 
\begin{equation}\label{l2}
a_{L_2}>a_1+3R. 
\end{equation}

Given a vertex $(x,y)\in\mathbb{Z}^2_+$ and $i\in\mathbb{N}$, define the events
\begin{align*}&R_i^{(x,y)}=\left\{\begin{array}{l}\langle (x,y),(x+i,y+1)\rangle\mbox{ and }\\\langle (x+i,y+1),(x+i+a_1,y+2)\rangle\mbox{ are open}\end{array}\right\},\\[.2cm] &S_i^{(x,y)}=\left\{\begin{array}{l}\langle (x,y),(x+i,y+1)\rangle\mbox{ and }\\\langle (x+i,y+1),(x+i+a_{L_2},y+2)\rangle\mbox{ are open}\end{array}\right\}.\end{align*} Also define \begin{align*}&T_-^{(x,y)}=\left(\cup_{i=1}^{a_{L_1}}R_i^{(x,y)}\right)\cap\left(\cup_{i=1}^{a_{L_1}}S_i^{(x,y)}\right),\\[.2cm]&T_+^{(x,y)}=\left(\cup_{i=a_{L_1}+1}^{a_{2L_1}}R_i^{(x,y)}\right)\cap\left(\cup_{i=a_{L_1}+1}^{a_{2L_1}}S_i^{(x,y)}\right).\end{align*}
\begin{figure}[t]
\centering
\fbox{\includegraphics[width=\textwidth]{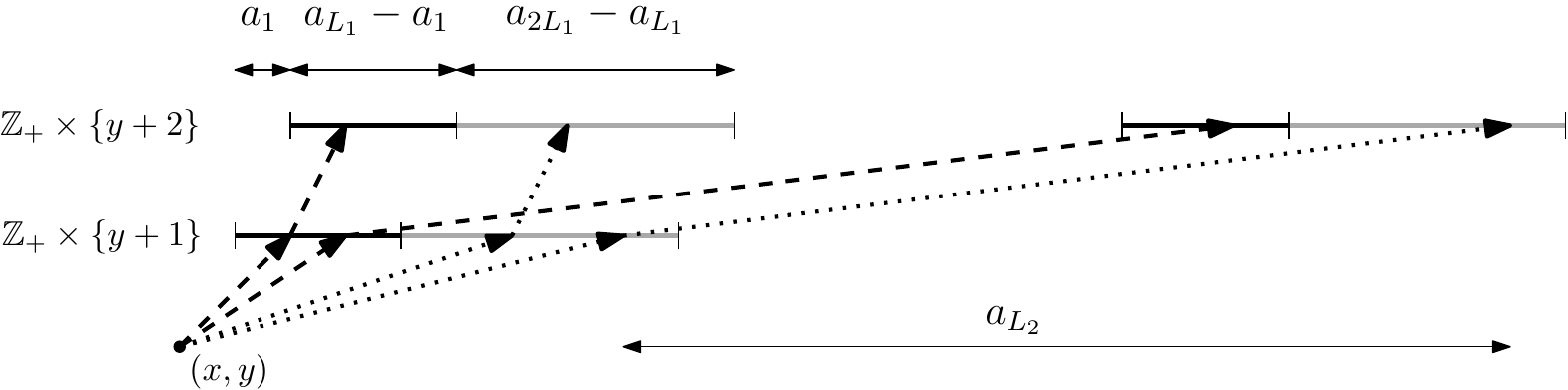}}
\caption{A realization of the events $T^{(x,y)}_-$ and~$T^{(x,y)}_+$, with dashed (respectively, dotted) lines representing open directed edges.}
\label{fig:seedevents}
\end{figure}
Observe that by \eqref{eq:bin_cond}, 
\begin{equation}
\label{eq:tmaismenos}
P^{a_{L_2}}\left( T_-^{(x,y)}\right)>1-\delta,\qquad P^{a_{L_2}}\left( T_+^{(x,y)}\right)>1-\delta.
\end{equation}
Also,
\begin{equation}\label{eq:propt-}
\begin{split}
\text{on } T^{(x,y)}_-, \;&(x,y) \rightsquigarrow [x+2a_1,\;x+a_1 +a_{L_1}] \times \{y+2\},\\  &(x,y) \rightsquigarrow [x+a_1+ a_{L_2},\;x+a_{L_1} +a_{L_2}] \times \{y+2\}
\end{split}
\end{equation}
and
\begin{equation}\label{eq:propt+}
\begin{split}
\text{on } T^{(x,y)}_+,\;&(x,y) \rightsquigarrow [x+a_1 + a_{L_1},\;x+a_1 +a_{2L_1}] \times \{y+2\},\\ \;&(x,y) \rightsquigarrow [x+a_{L_2} + a_{L_1},\;x+a_{L_2} +a_{2L_1}] \times \{y+2\}\end{split}
\end{equation}
(note that, by \eqref{eq:defR} and \eqref{l2}, the two horizontal segments in \eqref{eq:propt+} are disjoint, and similarly in \eqref{eq:propt-}).

The next step   is to define a renormalized lattice $G^*$ (also an oriented graph); vertices of $G^*$ will correspond to certain horizontal line segments in the original graph $G$. An exploration of the points reachable from the origin in $G$ under the measure $P^{a_{L_2}}$ will produce, as its `coarse-grained' counterpart, a site percolation configuration on $G^*$. As is usual, two properties will result from the coupling: first, percolation in $G^*$ will occur with high probability, and second, percolation in $G^*$ will imply percolation in $G$.

We let $G^*=(\mathbb{V}^*,\mathbb{E}^*)$, where $\mathbb{V}^*=\{(i,j)\in\mathbb{Z}\times\mathbb{Z}_+; i+j\mbox{ is even}\}$ and  $\mathbb{E}^*$ is the set of oriented edges $\mathbb{E}^*=\{\langle(i,j),(i\pm 1,j+1)\rangle;(i,j)\in\mathbb{V}^*\}$. Define the following order in $\mathbb{V}^*$: given $(i_1,j_1),(i_2,j_2)\in\mathbb{V}^*$ we say that $(i_1,j_1)\prec(i_2,j_2)$ if and only if $j_1<j_2$ or $(j_1=j_2\mbox{ and }i_1<i_2)$.
Given $S\subset\mathbb{Z} \times \mathbb{Z}_+$, we define the exterior boundary of $S$ as the set $$\partial_e S=\{(i,j)\in \mathbb{V}^*\backslash S; (i-1,j-1)\in S\mbox{ or }(i+1,j-1)\in S\}.$$

For each $(i,j) \in \mathbb{V}^*$, define 
$$z_{i,j} = j\cdot a_{L_1} + \frac{i+j}{2} \cdot a_{L_2} + \frac{j-i}{2}\cdot  a_{1}.$$
Also let
$$v_{i,j} = (z_{i,j}, 2j) \in \mathbb{V},\qquad I_{i,j} = [z_{i,j}-R,\;z_{i,j}+R] \times \{2j\} \subset \mathbb{V}.$$
These vertices and intervals are depicted in Figure \ref{fig:renormgraph}. Note that, for all $(i,j)$,
\begin{align}\label{eq:dist_hor}
&z_{i+2,j} - z_{i,j} = a_{L_2} - a_1,
\end{align}
so, by the choice of $L_2$ in \eqref{l2}, the segments $I_{i,j}$ are pairwise disjoint. Additionally,
\begin{align}\label{eq:dist_ver1}
&z_{i-1,j+1} - z_{i,j} = a_1 + a_{L_1},\\
&z_{i+1,j+1} - z_{i,j} = a_{L_1} + a_{L_2}.\label{eq:dist_ver2}
\end{align}

\begin{figure}[t]
\centering
\fbox{\includegraphics[width=\textwidth]{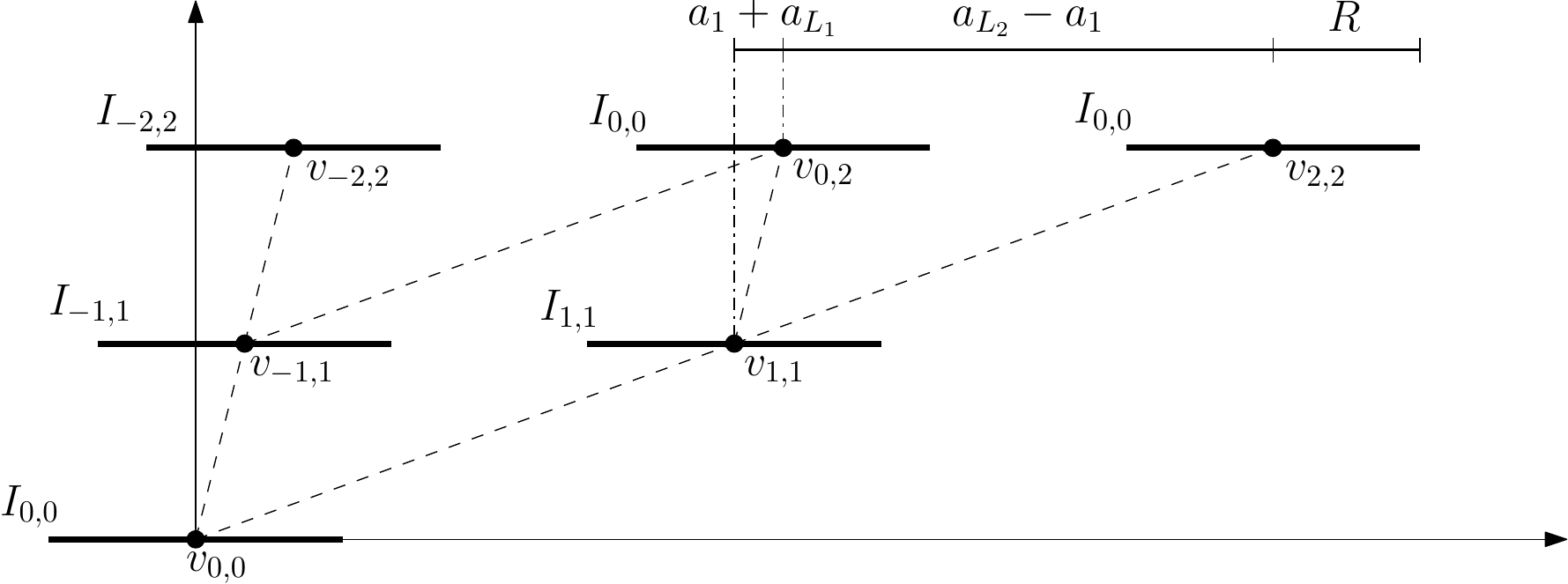}}
\caption{The vertices $v_{i,j}$ and the line segments $I_{i,j}$, for $(i,j) \in \mathbb{V}^*$.}
\label{fig:renormgraph}
\end{figure}

Let us now present our exploration algorithm. We will define inductively two increasing sequences $(A_i)_i$ and $(B_i)_i$ of subsets of $\mathbb{V}^*$. Set $A_0=B_0=\emptyset$ and $x_0=(0,0)$. We declare the vertex $x_0=(0,0)$ as good if the event $T_-^{(0,0)}$ occurs. Then, we define:
\begin{align*}
&A_{1} =
\begin{cases}
A_0\cup\{x_0\},&\mbox{  if } x_0\mbox{ is good},\\
A_0, &\mbox{   otherwise},
\end{cases}\qquad B_{1}=\begin{cases}
B_0,&\mbox{  if } x_0\mbox{ is good},\\
B_0\cup\{x_0\},& \mbox{  otherwise.}
\end{cases}
\end{align*}
If $x_0$ is not good, then we stop our recursive procedure. Note that, if $x_0$ is good, then by \eqref{eq:propt+} and \eqref{eq:propt-},
\begin{align*}
&(0,0) \rightsquigarrow [2a_1,\;a_1 + a_{L_1}] \times \{2\}\\&\hspace{2cm} \subset [a_1 + a_{L_1} - R,\; a_1 + a_{L_1} + R] \times \{2\} = I_{-1,1},\\
&(0,0) \rightsquigarrow [a_1 + a_{L_2},\;a_{L_1} + a_{L_2}] \times \{2\}\\&\hspace{2cm}  \subset [a_{L_1} + a_{L_2} - R,\; a_{L_1} + a_{L_2} + R] \times \{2\} = I_{1,1}.
\end{align*}

Assume $A_n, B_n$ have been defined for $n \geq 1$, and the following conditions are satisfied:
\begin{enumerate}
\item[(a)] $A_n$ is connected,
\item[(b)] $B_n \subset \partial_e A_n$, 
\item[(c)] bonds started from vertices outside $\cup_{(i,j) \in A_n \cup B_n} I_{i,j}$ are still unexplored, and
\item[(d)] $(0,0) \rightsquigarrow I_{i,j}$ for each $(i,j) \in (\partial_e A_n) \backslash B_n$.
\end{enumerate}
Now, if $(\partial_e A_{n})\backslash B_{n} = \emptyset$ we stop our recursive definition. Otherwise we let $x_{n}= (i,j)$ be the minimal point of $(\partial_e A_{n})\backslash B_{n}$ with respect to the order $\prec$ defined above. By property (d) above, we can fix a vertex $(u,2j) \in I_{i,j}$ such that $(0,0) \rightsquigarrow (u,2j)$. In case  $u \in [z_{i,j}-R, z_{i,j}] $
that is, $(u,2j)$ belongs to the left half of $I_{i,j}$ (including the midpoint), then we declare that $x_n$ is good if the event $T^{(u,2j)}_+$ occurs. In case $(u,2j) \in (z_{i,j},z_{i,j}+R]$, then we declare the $x_n$ is good if the event $T^{(u,2j)}_-$ occurs.  Then we define
\begin{align*}
&A_{n+1} =
\begin{cases}
A_n\cup\{x_n\},&\mbox{  if } x_n\mbox{ is good},\\
A_n, &\mbox{   otherwise},
\end{cases}\quad B_{n+1}=\begin{cases}
B_n,&\mbox{  if } x_n\mbox{ is good},\\
B_n\cup\{x_n\},& \mbox{  otherwise.}
\end{cases}
\end{align*}
It is clear that (a), (b), (c) listed above are satisfied with $A_{n+1}, B_{n+1}$ in the place of $A_n, B_n$. Let us now verify that our steering mechanism (that is, choosing $T_+$ or $T_-$ according to the position of $(u,2j)$) guarantees property (d). Consider first the case where $(u,2j)$ is in the left half of $I_{i,j}$, that is, $u \in [z_{i,j}-R,z_{i,j}]$; then,
\begin{align*}
&u+a_1+a_{L_1} \geq z_{i,j} - R +a_1 + a_{L_1} \stackrel{\eqref{eq:dist_ver1}}{=} z_{i-1,j+1} -R,\\
&u+a_1+a_{2L_1} \leq z_{i,j} + a_1 + a_{2L_1} \stackrel{\eqref{eq:dist_ver1}}{=} z_{i-1,j+1} + a_{2L_1}-a_{L_1} \stackrel{\eqref{eq:defR}}{\leq} z_{i-1,j+1} + R,\\[.1cm]
&u+a_{L_2} + a_{L_1} \geq z_{i,j} - R + a_{L_2} + a_{L_1} \stackrel{\eqref{eq:dist_ver2}}{=} z_{i+1,j+1} - R,\\[.1cm]
&u+a_{L_2} + a_{2L_1} \leq z_{i,j} + a_{L_2} + a_{2L_1} \stackrel{\eqref{eq:dist_ver2}}{=} z_{i+1,j+1} + a_{2L_1} - a_{L_1} \stackrel{\eqref{eq:defR}}{=} z_{i+1,j+1}+R,
\end{align*}
so \eqref{eq:propt+} implies that, if $T^{(u,2j)}_+$ occurs, we have $$(0,0) \rightsquigarrow (u,2j) \rightsquigarrow I_{i-1,j+1},\qquad 
(0,0)\rightsquigarrow (u,2j) \rightsquigarrow I_{i+1,j+1}.$$ The case where $(u,2j)$ is in the right half of $I_{i,j}$ is treated similarly (using \eqref{eq:propt-}). This completes the proof that (d) remains satisfied after each recursion step.

Regardless of whether or not the recursion ever ends, we let    ${\cal C}$ be the union of all sets $A_n$ that have been defined. By construction, it follows that $\{|{\cal C}| =\infty\} \subseteq \{(0,0) \rightsquigarrow \infty\}$.

Now, observe that
\begin{equation} \label{eq:for_comp}P^{a_{L_2}}(x_{n}\mbox{ is good} \mid (A_m,B_m): 0 \leq m \leq n)\geq 1-\delta.\end{equation}
This implies that ${\cal C}$ stochastically dominates the cluster of the origin in Bernoulli oriented site percolation on $G^*$ with parameter $1-\delta$ (see Lemma 1 of \cite{GM}). As $\delta$ can be taken arbitrarily small, this proves the desired result for $d = 1$.

Now let us show how the statement of Theorem \ref{thm:main_cor} can be obtained from the case we have already treated. Take $\epsilon > 0$ as in the assumption of the theorem; we can then take an infinite set $S \subset \Z^d$ so that $p_y > \epsilon$ for all $y \in S$.

Let 
\begin{align*}&\Pi^-_i(S) = \left\{\begin{array}{ll} x_i < 0: &(y_1,\ldots, y_{i-1}, x_i,y_{i+1},\ldots, y_d) \in S \\&\text{ for some } y_1,\ldots, y_{i-1},y_{i+1},\ldots, y_d \in \Z\end{array}\right\},\\
&\Pi^+_i(S) = \left\{\begin{array}{ll} x_i > 0: &(y_1,\ldots, y_{i-1}, x_i,y_{i+1},\ldots, y_d) \in S \\&\text{ for some } y_1,\ldots, y_{i-1},y_{i+1},\ldots, y_d \in \Z\end{array}\right\}
\end{align*}
(in words, these sets are given by the projection of $S$ to the $i$th axis, intersected with $(-\infty,0)$ and $(0,\infty)$, respectively). Since $S$ is infinite, there exists $i \in \{1,\ldots, d\}$ and $a\in \{-,+\}$ such that $\Pi^a_i(S)$ is infinite; for simplicity, assume that this is the case for $a = +$ and $i = 1$. It is then easy to see that the cluster of 0 for percolation on $G$, when projected on the first coordinate axis times~$\Z_+$, stochastically dominates a percolation configuration on $\Z\times \Z_+$ which belongs to the case we have already treated. Percolation of this configuration then implies percolation on $G$.

\section{Truncation question for related oriented models}
\label{sec:variations}
In this section, we consider different oriented percolation models in which the truncation question can be posed, and an affirmative answer follows almost directly from Theorem \ref{thm:main_cor}.

\subsection{Anisotropic oriented percolation on the square lattice}
\label{ss:aniso}

For the first model, let ${\cal G}=(\mathbb{Z}^2,{\cal E})$, where ${\cal E} = {\cal E}_v \cup (\cup_{n=1}^\infty {\cal E}_{h,n})$: \begin{align*}&{\cal E}_v=\{\langle(x,y),(x,y+1)\rangle: x,y\in \mathbb{Z}_+\},\\&{\cal E}_{h,n} =\{\langle(x,y),(x+n,y)\rangle: x,y\in \mathbb{Z}_+,n\in\mathbb{N}\},\end{align*} that is, ${\cal G}$ is an oriented square lattice equipped with long-range horizontal bonds. Given $\sigma > 0$ and $(q_n)_n$ with $q_n \in [0,1]$ for each $n$, we define an oriented bond percolation model where each bond $e$ is open, independently of each other,  with probability $\sigma$ or $q_{n}$, if $e\in{\cal E}_v$ or $e\in{\cal E}_{h,n}$, respectively. Let ${\cal P}$ be a probability measure under which this model is defined.

For the graph ${\cal G}$, an oriented path is a sequence $(v_1, v_2,\ldots)$ such that, for each $i$, $v_{i+1}-v_{i}=(0,1)$ or $(n,0)$ for some $n\in\mathbb{N}$; the path is open if each oriented bond in it is open. We use also the notation $\{(0,0) \rightsquigarrow \infty \}$ to denote the set of configurations such that the origin is connected to infinitely many vertices by oriented open paths on ${\cal  G}$.

As in Section 1, we denote by $(q_n^k)_n$ and ${\cal P}^k$ the truncated sequence and the truncated probability measure. Thus, for this graph we have a result analogous to Theorem \ref{thm:main_cor}:

\begin{theorem}\label{thm:z2} For the Bernoulli long-range oriented percolation model on ${\cal G}$, if $\limsup q_n>0$,  then the truncation problem has an affirmative answer. Moreover, $${\displaystyle \lim_{k\to\infty}{\cal P}^k\{(0,0) \rightsquigarrow \infty \} = 1}.$$
\end{theorem}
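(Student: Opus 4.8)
The plan is to deduce Theorem \ref{thm:z2} directly from Theorem \ref{thm:main_cor} by finding an embedding of the oriented percolation model on $G$ (from Section~1, with $d=1$) inside the model on $\mathcal G$. The key observation is that the anisotropic model on $\mathcal G$ has, at every vertex, a vertical edge of length~$0$ (in the time direction) available with probability $\sigma > 0$; this vertical edge plays the role of the "do nothing" move, while the long horizontal edges of $\mathcal G$ will furnish the long-range displacements. Concretely, I would map a time-$t$ space-$x$ vertex of $G$ to a suitable vertex of $\mathcal G$, using one vertical step of $\mathcal G$ to advance the time coordinate and a horizontal step of $\mathcal G$ (of length $n$, open with probability $q_n$) to realize a jump of size $n$. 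Since we may restrict to $d=1$ and to $p_n = 0$ for $n \le 0$ in Theorem~\ref{thm:main_cor}, and since $\limsup q_n > 0$ by hypothesis, the sequence $(p_n)$ defined by $p_n := q_n$ for $n \ge 1$ and $p_n := 0$ for $n \le 0$ satisfies the hypothesis of Theorem~\ref{thm:main_cor}.

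First I would make the embedding precise. A single "macro-step" of the $G$-process from $(x,t)$ to $(x+n,t+1)$ should correspond in $\mathcal G$ to: one open vertical bond $\langle (x, s), (x, s+1)\rangle$ (available with probability $\sigma$), followed by one open horizontal bond $\langle (x, s+1),(x+n,s+1)\rangle$ (available with probability $q_n$), where $s$ is the current height in $\mathcal G$. Chaining these, each unit of $G$-time uses exactly one unit of $\mathcal G$-height plus a horizontal displacement, so an open oriented path in $G$ from $(0,0)$ lifts to an open oriented path in $\mathcal G$ from $(0,0)$, provided all the required vertical bonds are open. The subtlety is that we cannot simply assume all vertical bonds are open; instead, I would argue that the restriction of the $\mathcal G$-configuration to this family of edges dominates (indeed equals in law, on the relevant edges) the $G$-configuration tensored with independent Bernoulli$(\sigma)$ vertical bonds, and then absorb the factor $\sigma$ into the analysis: having an open vertical bond with probability $\sigma$ at each prospective step simply multiplies success probabilities, which the renormalization argument of Theorem~\ref{thm:main_cor} tolerates since the parameter $\delta$ there can be taken as small as we like. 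Equivalently, one can re-run the exploration from Section~\ref{sec:proof} verbatim, but redefining the "seed events" $R_i, S_i, T_\pm$ to include the requisite vertical $\mathcal G$-bonds; the binomial estimates \eqref{eq:bin_cond}–\eqref{eq:tmaismenos} still hold after replacing $\epsilon$ by $\sigma\epsilon$ (or, more cleanly, by first conditioning on a layer of vertical bonds being open with probability $\sigma$), and \eqref{eq:for_comp} then gives a coarse-grained site percolation on $G^*$ with parameter $1-\delta'$ for $\delta'$ arbitrarily small.

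The truncation is handled automatically: truncating $(q_n)$ at level $k$ corresponds exactly to truncating $(p_n)$ at level $k$, and under the embedding the $\mathcal G$-path built from a $G^k$-path uses only horizontal bonds of length at most $k$. Hence $\mathcal P^k\{(0,0) \rightsquigarrow \infty\} \ge c(\sigma) \cdot P^k\{(0,0)\rightsquigarrow\infty\}$ for an appropriate comparison, and more to the point the quantitative statement of Theorem~\ref{thm:main_cor} — that the coarse-grained process percolates with probability tending to $1$ — transfers: the origin of $\mathcal G$ percolates once the coarse-grained site process on $G^*$ does, and the latter has parameter $\to 1$ as $k\to\infty$. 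Therefore $\lim_{k\to\infty}\mathcal P^k\{(0,0)\rightsquigarrow\infty\} = 1$.

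I expect the main obstacle to be purely bookkeeping rather than conceptual: one must check that the exploration algorithm from Section~\ref{sec:proof}, when enriched with the vertical $\mathcal G$-bonds, still explores each relevant bond at most once (the vertical bonds attached to the intervals $I_{i,j}$ are disjoint across distinct renormalization sites, exactly as the horizontal bonds are, because of \eqref{eq:dist_hor}–\eqref{eq:dist_ver2} and the fact that each macro-site controls a bounded set of heights), so that the conditional independence needed for \eqref{eq:for_comp} and the stochastic domination by Bernoulli site percolation (Lemma~1 of \cite{GM}) go through unchanged. Once that is verified, the proof is a one-paragraph reduction. An alternative, and perhaps slicker, route is to observe that $\mathcal G$ with a vertical bond of probability $\sigma$ and horizontal bonds of probabilities $(q_n)$ is itself a special case of the $d=1$ model of Section~\ref{sec:proof} after the trivial reparametrization that merges a vertical step and a horizontal step into a single time step, so Theorem~\ref{thm:z2} is literally an instance of Theorem~\ref{thm:main_cor}; I would present whichever of these two formulations is cleanest to write rigorously.
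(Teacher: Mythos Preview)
Your approach is essentially the paper's: map each $G$-bond to a vertical-plus-horizontal pair in $\mathcal G$, so that $p_n=\sigma q_n$, and then rerun the exploration of Section~\ref{sec:proof}. The only substantive difference is the ordering: the paper takes horizontal-then-vertical, declaring $\langle(x,y),(x+n,y+1)\rangle$ open iff $\langle(x,y),(x+n,y)\rangle$ and $\langle(x+n,y),(x+n,y+1)\rangle$ are both open, so the induced $G$-bonds sharing an \emph{end} vertex are positively correlated (they share a vertical $\mathcal G$-edge). The paper then observes that this forces the coarse-grained site process on $G^*$ to be one-dependent (sites $(i-2,j)$ and $(i,j)$ both land paths in $I_{i-1,j+1}$), and handles this by taking $1-\delta$ supercritical for one-dependent site percolation via~\cite{LSS}. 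Your vertical-then-horizontal ordering puts the correlation at shared \emph{start} vertices instead; since the start vertices used at distinct renormalization sites lie in disjoint intervals, your variant in fact avoids the cross-site dependence altogether, so \eqref{eq:for_comp} and the comparison with i.i.d.\ site percolation go through exactly as in Section~\ref{sec:proof}. Either way the argument is the same in spirit; the paper just makes the dependence structure and its resolution explicit, whereas you leave it as bookkeeping.
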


\begin{proof} From the percolation model on ${\cal G}$, we define an induced bond percolation model on the graph $G$ of the previous sections with $d = 1$, that is, $G = (\mathbb{V},\mathbb{E})$ with $\mathbb{V} = \Z \times \Z_+$ and $\mathbb{E}$ as in \eqref{eq:def_edges_G}. We declare each bond $\langle (x,y);(x+n,y+1)\rangle$ in $G$ as open if and only if both the  bonds $\langle (x,y);(x+n,y)\rangle$ and $\langle (x+n,y);(x+n,y+1)\rangle$ in ${\cal G}$ are open. Observe that:
\begin{itemize}
\item each bond $\langle (x,y);(x+n,y+1)\rangle \in \mathbb{E}$ is open with probability $p_n:=\sigma q_n$ and by hypothesis $\limsup q_n>0$;
\item if there is an infinite open path in the induced model on $G$ then this implies the existence of an infinite oriented path in the original model on ${\cal G}$;
\item the induced percolation model on $G$ is not an independent model, because the open or closed statuses for bonds with the same end vertex are positively correlated. However, given any collection of bonds in which any two bonds have distinct end vertices, the statuses of all these bonds are independent. Therefore, there exist no problems regarding the definition of events analogous to~$T^{(x,y)}_-$ and~$T^{(x,y)}_+$ and in showing lower bounds like~\eqref{eq:tmaismenos}.
\end{itemize} 
\begin{remark}
The range of the dependence on the induced model on $G$ goes to infinity as the truncation parameter $k\rightarrow\infty$. Hence, the conclusion of Theorem \ref{thm:z2} could not be derived from standard techniques of stochastic comparison with product measures (see for instance the main result in \cite{LSS}).
\end{remark}
We can now prove that, for the induced model on $G$, percolation occurs with high probability if $k$ is large by an argument that is almost identical to the one of the previous section. The only difference is that, in the renormalized site percolation configuration on $G^*$ that results from the exploration algorithm, some dependence with range one now arises. This is because the probability that a vertex $x_n = (i,j) \in \mathbb{V}^*$ in our exploration is good will be affected by a previous query of the vertex $(i-2,j)$. This issue is settled by choosing the constant $\delta$ so that, for one-dependent oriented percolation configurations on $G^*$ with density of open bonds above $1-\delta$, percolation occurs with high probability, since we are now in the context of one-dependent percolation, where~$\cite{LSS}$ applies.
\end{proof}

\subsection{Long-range contact processes on $\mathbb{Z}^d$}

The second model we consider is a contact process on $\mathbb{Z}^d$ with long-range interactions, such as the one considered in \cite{ELV}. To define the model, we fix a family of non-negative real numbers $(\lambda_y)_{y \in \Z^d}$, and take a family of independent Poisson point processes on $[0,\infty)$:
\begin{itemize}
\item a process $D^{x}$ of rate 1 for each $x \in \mathbb{Z}^d$;
\item a process $B^{(x,y)}$ of rate $\lambda_{x-y}$ for each ordered pair $(x, y)$ with $x,y\in \mathbb{Z}^d$.
\end{itemize}
We view each of these processes as a random discrete subset of $[0,\infty)$ and write, for $0\leq a < b$, $D^{x}_{[a,b]} = D^{x} \cap [a, b]$ and $B^{(x,y)}_{[a,b]} = B^{(x,y)} \cap [a,b]$. We let $\mathscr{P}$ be a probability measure under which these processes are defined.

Fix $k \in \mathbb{N}$. Given $x, y \in \mathbb{Z}^d$ and $0 \leq s \leq t$, we say $(x,s)$ and $(y,t)$ are $k$-connected, and write $(x,s) \stackrel{k}{\rightsquigarrow} (y,t)$, if there exists a function $\gamma:[s,t] \to \mathbb{Z}^d$ that is right-continuous, constant between jumps and satisfies:
\begin{equation}\label{eq:infect_path}
\begin{split}
&\gamma(s) =x,\; \gamma(t) = y \text{ and, for all }r \in [s,t],\\&\hspace{5cm} r \notin D^{\gamma(r)},\\&\hspace{5cm}r \in B^{(\gamma(r-),\gamma(r))} \text{ if } \gamma(r) \neq \gamma(r-),\\
&\hspace{5cm} \|\gamma(r) - \gamma(r-)\|_\infty \leq k .
\end{split}
\end{equation}
This provides a continuous-time percolation structure for the lattice $\mathbb{Z}^d$. From the point of view of interacting particle systems, one usually defines
$$\xi_{t,k}(x) = \mathds{1}\{(0,0) \stackrel{k}{\rightsquigarrow} (x,t)\},\qquad x \in \mathbb{Z}^d,\; t \geq 0,$$ where $\mathds{1}$ denotes the indicator function, thus obtaining a Markov process $(\xi_{t,k})_{t\geq 0}$ on the state space $\{0,1\}^{\Z^d}$. For this process, the identically  zero  configuration (denoted by $\underline{0}$) is absorbing.

\begin{theorem}
For the long-range contact process on $\mathbb{Z}^d$, if there exists $\underline{\lambda}  > 0$ such that $\lambda_y>\underline{\lambda} $ for infinitely many $y$, then $$ \lim_{k \to \infty} \mathscr{P}\left(\xi_{t,k} \neq \underline{0} \text{ for all } t \right) = 1.$$
\end{theorem}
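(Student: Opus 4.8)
The plan is to reduce the statement to Theorem~\ref{thm:main_cor} through a standard time-discretization of the graphical representation, in the spirit of the reduction carried out in the proof of Theorem~\ref{thm:z2}.

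Fix any $h>0$. From the Poisson processes $(D^x)_{x\in\Z^d}$ and $(B^{(x,y)})$, construct a bond configuration on the graph $G=(\mathbb V,\mathbb E)$ of Section~\ref{sec:proof} (with the same dimension $d$): declare the oriented bond $\langle(x,t),(x+y,t+1)\rangle$ \emph{open} whenever there is $\tau\in[th,(t+1)h]$ with $\tau\in B^{(x,x+y)}$, $D^x\cap[th,\tau]=\emptyset$ and $D^{x+y}\cap[\tau,(t+1)h]=\emptyset$, and let $P^{(h),k}$ be the law of this configuration after discarding every bond of displacement larger than $k$, i.e.\ after the truncation \eqref{eq:truncation}. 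Two facts are elementary. First, concatenating open bonds of displacement at most $k$ along consecutive layers produces an infection path for the $k$-truncated graphical representation (the $D$-conditions on overlapping sub-intervals glue together), so the event that the $k$-truncated discretized configuration contains an infinite open oriented path from $(0,0)$ — an event of $\mathscr{P}$-probability $P^{(h),k}\{(0,0)\rightsquigarrow\infty\}$ — is contained in $\{\xi_{s,k}\ne\underline 0\text{ for all }s\ge0\}$. Second, for each $y$ with $\lambda_y>\underline\lambda$, bounding below by the probability that neither $x$ nor $x+y$ carries a $D$-mark in $[th,(t+1)h]$ while $B^{(x,x+y)}$ carries at least one mark there — three independent events — shows that the bond is open with probability at least $e^{-2h}(1-e^{-\underline\lambda h})=:c_h>0$; since $\lambda_y>\underline\lambda$ for infinitely many $y$, the connection probabilities of the untruncated discretized model exceed the fixed threshold $c_h$ for infinitely many $y$.

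The theorem then follows once one knows that $P^{(h),k}\{(0,0)\rightsquigarrow\infty\}\to1$ as $k\to\infty$, for this gives $\mathscr{P}(\xi_{s,k}\ne\underline 0\text{ for all }s)\ge P^{(h),k}\{(0,0)\rightsquigarrow\infty\}\to1$ (and the probability is trivially at most $1$). This convergence is precisely the assertion of Theorem~\ref{thm:main_cor}, with one caveat: the discretized model is not a product measure, since two bonds lying in the same layer $t$ and sharing a $\Z^d$-endpoint use a common death clock on $[th,(t+1)h]$ and so have correlated statuses (bonds based on disjoint space–time information remain independent, and bonds in distinct layers are independent because the relevant intervals overlap in a null set). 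I would handle this exactly as in the proof of Theorem~\ref{thm:z2}. Conditioning on the pertinent death clocks being empty on the pertinent sub-intervals — which only introduces harmless factors $e^{-h}$, absorbed by re-choosing $L_0$ and $L_1$ in \eqref{eq:bin_cond} — re-establishes the lower bounds on the seed events $T^{(x,y)}_-$ and $T^{(x,y)}_+$ analogous to \eqref{eq:tmaismenos}; all displacements used are $a_1$, $a_{L_2}$, or in $\{1,\dots,a_{2L_1}\}$, hence bounded by the fixed constant $a_{L_2}$, so the construction is feasible for all $k\ge a_{L_2}$; and, after taking $L_2$ large enough in \eqref{l2}, the shared clocks make the status of a renormalized vertex $(i,j)$ produced by the exploration algorithm depend, besides on itself, only on its same-level neighbours $(i\pm2,j)$ (the clocks $D^z$ restricted to $[(2j+1)h,(2j+2)h]$ queried from $(i,j)$ and from $(i\pm2,j)$ may coincide, but enlarging $L_2$ rules out any further overlap). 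Thus ${\cal C}$ stochastically dominates the cluster of the origin in a one-dependent oriented site percolation on $G^*$ of density $1-\delta$, which percolates with high probability for $\delta$ small by \cite{LSS}; letting $\delta\to0$ concludes. (As in the proof of Theorem~\ref{thm:main_cor}, one first reduces to $d=1$ by projecting onto a coordinate axis along which $\{y:\lambda_y>\underline\lambda\}$ has infinite projection.)

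The step I expect to be the main obstacle is, just as in Theorem~\ref{thm:z2}, the control of this finite-range dependence: re-proving the seed-event estimates under the conditioning described above, and — above all — verifying that the shared recovery clocks do not make the renormalized configuration depend beyond nearest neighbours on $G^*$, so that the one-dependent comparison of \cite{LSS} applies. Everything else is a routine transcription of the proof of Theorem~\ref{thm:main_cor}.
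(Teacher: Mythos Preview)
Your proposal is correct and follows essentially the same route as the paper: time-discretize the graphical representation into a dependent oriented bond percolation on $G$, reduce to $d=1$, rerun the seed-event/exploration argument of Section~\ref{sec:proof}, and absorb the resulting range-one dependence in $G^*$ via \cite{LSS}. The only cosmetic differences are that the paper chooses the time step $\tau$ small as a function of $\delta$ (so that a single death clock is empty with probability $>1-\delta/4$) rather than fixing an arbitrary $h$ and absorbing the factor into $L_0,L_1$, and that the paper uses the simpler bond rule ``both death clocks empty on the whole sub-interval and at least one birth mark present'' instead of your infection-path rule.
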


\begin{proof}
Similarly to the proof in Section \ref{sec:proof}, we can easily reduce the proof to the case of $d= 1$ and  $\lambda_{a_n} > \underline{\lambda} > 0$ for an increasing sequence $(a_n)_{n\in\mathbb{N}}$. So we now turn to this case.

Fix $\delta > 0$. Choose $\tau > 0$ such that
\begin{equation}
\label{eq:choice_delta} \mathscr{P}(D^0_{[0,\tau]} \neq \varnothing) < \delta/4.
\end{equation} Given the Poisson processes $\{(D^x)_x, (B^{(x,y)})_{(x,y)}\}$, we now define a percolation configuration on the graph $G = (\mathbb{V},\mathbb{E})$. We declare a bond $\langle (x,n),(x+y,n+1)\rangle$ of $\mathbb{E}$ to be open if: $$D^x_{[\tau n, \tau(n+1)]} = \varnothing,\quad D^{x+y}_{[\tau n, \tau(n+1)]} = \varnothing,\quad B^{(x,x+y)}_{[\tau n, \tau(n+1)]} \neq \varnothing.$$
Let $P$ be the probability distribution of this induced percolation configuration, and $P^k$ the corresponding truncation (that is, the induced configuration obtained from $\{(D^x)_x, (B^{(x,y)})_{(x,y)}\}$ by suppressing the Poisson processes $B^{(x,y)}$ with $|y-x| > k$). 

We observe that
\begin{itemize}
\item each bond $\langle (x,n),(x+y,n+1)\rangle \in \mathbb{E}$ is open with probability larger than $$(1-\delta/4)^2 \cdot (1-\exp\{-\lambda_y \tau\});$$
\item if there is an infinite open path in the induced model on $G$ for some $k$, then we can construct a function $\gamma:[0,\infty) \to \mathbb{Z}$ with $\gamma(0) = 0$ and satisfying the three last requirements of \eqref{eq:infect_path} for $r \in [0,\infty)$, so that have $\xi_{k,t} \neq \underline{0}$ for all $t$;
\item given any collection of bonds in which any two bonds have distinct start vertices \textit{and} distinct end vertices, the statuses of all these bonds are independent. Note that there is more dependence here than in the model of Section \ref{ss:aniso} (since bonds with coinciding starting points are dependent here), so we have to be more careful in implementing the proof of Section \ref{sec:proof}.
\end{itemize}

We let $\epsilon = (1-\tfrac{\delta}{4})^2\cdot (1-\exp\{-\underline{\lambda} \tau\})$ and choose $L_0$ and $L_1$ such that
\begin{equation}\label{eq:bins_contact}
P(\text{Bin}(L_0,\epsilon) > 0) > 1-\frac{\delta}{8},\qquad P(\text{Bin}(L_1,\epsilon) > L_0) > 1-\frac{\delta}{8}.
\end{equation}
We now choose $R$ and $L_2$ and, for $(x,y) \in \Z^2_+$, we define events $R^{(x,y)}_i$, $S^{(x,y)}_i$, $T^{(x,y)}_-$ and $T^{(x,y)}_+$ exactly as in Section \ref{sec:proof}. 

Note that the event $\cup_{i=1}^{a_{L_1}} R^{(0,0)}_i$ is guaranteed to occur if the following items are satisfied:
\begin{itemize}
\item[(a)] $D^0_{[0,\tau]} = \varnothing$;
\item[(b)] for at least $L_0$ indices $i \in \{1,\ldots, L_1\}$, we have
$$D^{a_i}_{[0,\tau]} = \varnothing,\quad B^{(0,a_i)}_{[0,\tau]} \neq \varnothing. $$
\item[(c)] out of the indices $i$ satisfying the requirements of item (b), at least one also satisfies
$$D^{a_i}_{[\tau,2\tau]} = \varnothing,\qquad D^{a_i + a_1}_{[\tau,2\tau]} = \varnothing,\qquad B^{(a_i,a_i + a_1)}_{[\tau,2\tau]} \neq \varnothing. $$
\end{itemize}
Hence, by \eqref{eq:choice_delta} and \eqref{eq:bins_contact}, we have $P(\cup_{i=1}^{a_{L_1}} R^{(0,0)}_i) > 1-\delta/2$, and, by translation invariance of the Poisson processes, $P(\cup_{i=1}^{a_{L_1}} R^{(x,y)}_i) > 1-\delta/2$ for any $(x,y) \in \Z^2_+$. Similarly, we have $P(\cup_{i=1}^{a_{L_1}} S^{(x,y)}_i) > 1-\delta/2$, so that $$P\left(T^{(x,y)}_- \right) > 1-\delta,$$
and the same argument shows that
$$P\left(T^{(x,y)}_+ \right) > 1-\delta$$
also holds.

From here onward, the proof proceeds as in Section \ref{sec:proof}, with the only difference that already appeared in the treatment of the model of Section \ref{ss:aniso}: in the site percolation configuration  in the lattice $G^*$ that results from the exploration algorithm, dependence of range one arises. As in Section \ref{ss:aniso}, this issue is resolved (and percolation is guaranteed) as soon as $1-\delta$ is supercritical for one-dependent site percolation on $G^*$.
\end{proof}

\section*{Acknowledgements}
The authors would like to thank Daniel Ungaretti and Rangel Baldasso for helpful discussions.
The research of B.N.B.L.\ was supported in part by CNPq grant 309468/2014-0 and FAPEMIG (Programa Pesquisador Mineiro). C.A. was supported by FAPESP, grant 2013/24928-2, and is thankful for the hospitality of the UFMG Mathematics Department.
The research of M.H.\ was partially supported by CNPq grant 406659/2016-1.

\end{document}